\newtheorem{theorem}{Theorem}[subsection]
\newtheorem{corollary}[theorem]{Corollary}
\newtheorem{lemma}[theorem]{Lemma}
\theoremstyle{definition}
\newtheorem{definition}[theorem]{Definition}
\theoremstyle{remark}
\newtheorem{remark}[theorem]{Remark}
\numberwithin{equation}{subsection}
\newtheorem{example}{Example}[section]
\begin{document}

\title{FIXED POINT RESULTS OF $\mathfrak{R}$-ENRICHED INTERPOLATIVE KANNAN
PAIR IN $\mathfrak{R}$-CONVEX METRIC SPACES}
\author{Mujahid Abbas}
\address{ University of Pretoria \newline
\indent Department of Mathematics and Applied
Mathematics \newline
\indent Hatfield 002, Pretoria, South Africa}
\email{mujahid.abbas@up.ac.za}

\author{Rizwan Anjum}
\address{ Riphah International University \newline
\indent Department of Mathematics Riphah Institute of Computing and
Applied Sciences \newline
\indent Lahore 54000, Pakistan}
\email{rizwan.anjum@riphah.edu.pk}

\author{Shakeela Riasat}
\address{ Government College
University \newline
\indent Abdus Salam School of Mathematical Sciences \newline
\indent Lahore 54600, Pakistan}
\email{shakeelariasat@sms.edu.pk}
\thanks{}
\thanks{}
\date{}
\dedicatory{}

\subjclass[2000]{47H09, 47H10, 47J25, 54H25.
}
\keywords{\em Fixed point, enriched Kannan contraction,
interpolative Kannan type contraction, convex metric spaces,
well-posedness, limit shadowing property, Ulam-Hyers stability}

\begin{abstract}
The purpose of this paper is to introduce the class of $\mathfrak{R}$-enriched interpolative Kannan pair and proved a common fixed point result
in the context of $\mathfrak{R}$-complete convex metric spaces. Some
examples are presented to support the concepts introduced herein. Moreover,
we study the well-posedness, limit shadowing property and Ulam-Hyers stability
of the mappings introduced herein. Our result extend and
generalize several comparable results in the existing literature.
\end{abstract}

\maketitle
\pagestyle{myheadings}
\markboth{M. Abbas, R. Anjum and S. Riasat }{ENRICHED INTERPOLATIVE KANNAN
PAIR IN}

\section{Introduction and Preliminaries}

The study of the existence and approximation of the solutions of nonlinear
functional equations such as differential equations, integral equations,
integro-differential equations is always been a source of great interest for
mathematicians. Fixed point theory plays a vital role in this aspect. The
problem of finding the solutions of a functional equation can be shifted to
finding the fixed point of a suitable mapping defined on a set endowed with
a certain structure, that is, finding the solution of the fixed point
equation given by
\begin{equation}
x=Tx,  \label{FPP}
\end{equation}%
where $T$ is defined on a certain space $X$. An equation (\ref{FPP}) is
solved by applying some appropriate fixed point results.\newline
Banach contraction mapping principle \cite{Banach} is one of the most useful
results for approximation of the solution of (\ref{FPP}), if a mapping $T: X \rightarrow X$ on a complete metric space $X$,
satisfies the following condition:
\begin{equation}
d(Tx,Ty)\leq cd(x,y),\ \ \text{for all }x,y\in X\   \label{BCP}
\end{equation}%
where $c\in \lbrack 0,1)$. The mapping
satisfying the above condition by taking $c=1$ is called nonexpansive.
\newline
Note that, a mapping which satisfies the above condition is uniformly
continuos mapping. Kannan \cite{Kannan} proved a fixed point result which
is applicable to the equation (\ref{FPP}) where the mapping defined on a
complete metric space need not be continuous. \ In this way, Kannan fixed
point result extends the scope of Banach contraction mapping principle \cite%
{Banach}. Kannan \cite{Kannan} replaced the condition (\ref{BCP}) with the
following:
\begin{equation}
d(Tx,Ty)\leq a\{d(x,Tx)+d(y,Ty)\},  \label{kc}
\end{equation}%
for all $x,y\in X,$ where $a\in \lbrack 0,0.5).$ It was shown that, an
equation (\ref{kc}) involving the Kannan mapping on a complete metric
space, has a unique solution. This paper was a genesis for a multitude of
fixed point papers over the next two decades, see \cite%
{b,Kannan1,Kannan,Kikkawa} and references mentioned therein.\newline
Note that Kannan fixed point result \cite{Kannan} is based on the
generalization of the condition (\ref{BCP}). The same applies to the most of
results in metric fixed point theory. One of such results proved by
Karapinar \cite{Karapinar}, who employed the technique of interpolation and introduced
a new class of interpolative Kannan type mappings defined on a complete
metric space. Let us recall the following definition:\newline
A mapping $T:X\rightarrow X$ is called an interpolative Kannan type if
there exists $a\in \lbrack 0,1)$ such that for all $x,y\in X\setminus
Fix(T), $ we \ have
\begin{equation}
d(Tx,Ty)\leq a[d(x,Tx)]^{\alpha }[d(y,Ty)]^{1-\alpha },  \label{IKC}
\end{equation}%
where $\alpha \in (0,1)$ and $Fix(T)=\{x\in X;\ \ x=Tx\}$\newline
The study of common fixed points of mappings satisfying certain contractive
conditions has been at the center of vigorous research activity. Common
fixed point results are useful in finding the simultaneous solution of \ a
certain system of two nonlinear functional equations. Noorwali \cite{N}
extended the condition (\ref{IKC}) to two mappings and proved a common
fixed point result for such mappings given as: Let $(X,d)$ be a complete metric space and $T,S: X \rightarrow X$. If there exists $a\in \lbrack 0,1)$ such that for all $x,y\in
X\setminus \{Fix(T),Fix(S)\},$ we \ have
\begin{equation*}
d(Tx,Sy)\leq a[d(x,Tx)]^{\alpha }[d(y,Sy)]^{1-\alpha },
\end{equation*}
where $\alpha \in (0,1).$ Then, $T$ and $S$ has a unique common fixed point in $X$.\newline
For more results in this direction, we refer to \cite%
{Agarwal,Aydi,Aydi2,Debnath,Gaba,Gaba2,Karapinar2,Karapinar3,Karapinar4,N})
and references mentioned therein.\newline

On the other hand, authors \cite{Abbas2,Abbas11,Abbas111,Anjum1,B6,B4,B2,main cite,B7,B9} used the technique of
enrichment of contractive mappings in the setting of Banach spaces. \newline
In 2021, Rizwan and Abbas \cite{AnjumNew} introduced the class of $(a,b,c)$-modified enriched Kannan pair $(T,S)$ in the setting of Banach space.
A pair of self mappings  $(T,S)$ is called $(a,b,0)$-modified enriched
Kannan pair on a normed space  $(X,\left\Vert \cdot \right\Vert )$, if there exist $b\in \lbrack 0,\infty )$ and $a\in
\lbrack 0,0.5)$ such that
\begin{equation*}
 \left\Vert b(x-y)+Tx-Sy \right\Vert\leq a\big\{\left\Vert x-Tx\right\Vert+\left\Vert y-Sy\right\Vert\big\},
\end{equation*}
holds for all $x,y\in X.$ It was proven that any $(a,b,0)$-modified enriched
Kannan pair  $(T,S)$ in the setting of Banach space $X$ admits a unique common fixed point.
\newline
An equation (\ref{FPP}) involving a nonexpansive mapping defined on a
complete metric space may have no solution or have infinitely many
solutions. A fundamental problem in fixed point theory of nonexpansive
mappings is to find conditions under which an equation (\ref{FPP}) has a
solution. It is intimately connected with differential equations and with
the geometry of the Banach spaces. The techniques used to prove the
existence of the solution of non expansive fixed point equations are
different than those considered in metric fixed point theory for mappings
satisfying certain contraction conditions. The interplay between the
geometry of Banach spaces and fixed point theory has been very strong and
fruitful. Geometrical properties of Banach spaces which mainly use convexity
hypothesis, play key role in solving an equation (\ref{FPP}) involving a
nonexpansive mapping. The results obtained in this direction were starting
point of a new mathematical field : the application of the geometric theory
of Banach spaces to fixed point theory.

In 1970, Takahashi \cite{Takahashi} introduced a notion of convexity
structure in a metric space with the aim of studying the fixed point problem
for nonexpansive mappings in such spaces.

\begin{definition}\rm\label{cpo}
\cite{Takahashi} Let $(X,d)$ be a metric space. A continuous function $%
W:X\times X\times \lbrack 0,1]\rightarrow X$ is said to be a convex
structure on $X$ if, for all $x,y\in X$ and for any $\lambda \in \lbrack
0,1] $, we have
\begin{equation*}
  d(u,W(x,y;\lambda ))\leq \lambda d(u,x)+(1-\lambda )d(u,y),\ \ \text{for any
}u\in X.
\end{equation*}
\end{definition}

A metric space $(X,d)$ endowed with a convex structure $W$ is called a
Takahashi convex metric space and is usually denoted by $(X,d,W)$.
Obviously, any linear normed space and each of its convex subsets are convex
metric spaces with the natural convex structure given by
\begin{equation*}
  W(x,y;\lambda )=\lambda x+(1-\lambda )y,\ \ \forall x,y\in X.
\end{equation*}
However, the converse does not hold in general. There are many examples of
convex metric spaces which cannot be embedded in any normed space. Let us
consider the following example.

\begin{example}\rm\cite{exp}
Let $X = \mathbb{R}^{2}$. For $x=(x_{1},x_{2})$, $y=(y_{1},y_{2})$ in $X$
and $\alpha \in [0,1]$. Define a mapping $W: X \times X \times [0,1]
\rightarrow X$ by
\begin{equation*}
W(x,y,\alpha)= \bigg(\alpha x_{1}+ (1-\alpha)y_{1}, \frac{\alpha x_{1}x_{2}
+ (1- \alpha)y_{1}y_{2}}{\alpha x_{1}+ (1-\alpha)y_{1}}\bigg),
\end{equation*}
and a metric $d: X \times X \rightarrow [0,\infty)$ by $d(x,y)= |x_{1}-
y_{1}|+ |x_{1}x_{2}- y_{1}y_{2}|$. It can be verified that $X$ is a convex
metric space but not a normed space.
\end{example}
   By using the above idea of Takahashi \cite{Takahashi}, Berinde and  P\u{a}curar \cite{main cite}, in 2021, proposed a new class of enriched contractions in a more generalized setting of convex metric spaces which is major inspiration for this  manuscript.\newline
We need the following lemma in the sequel.

\begin{lemma}
\cite{main cite} \label{fix} Let $(X,d,W)$ be a convex metric space and $%
T:X\rightarrow X$. Define the mapping $T_{\lambda }:X\rightarrow X$ by
\begin{equation}
T_{\lambda }x=W(x,Tx;\lambda ).  \label{2}
\end{equation}%
Then, for any $\lambda \in \lbrack 0,1),$ we have
\begin{equation*}
Fix(T)=Fix(T_{\lambda }).
\end{equation*}
\end{lemma}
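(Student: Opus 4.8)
The plan is to prove the two inclusions $Fix(T)\subseteq Fix(T_{\lambda})$ and $Fix(T_{\lambda})\subseteq Fix(T)$ separately, each by a single well-chosen application of the defining inequality of the convex structure $W$ recorded in Definition \ref{cpo}, namely $d(u,W(x,y;\lambda))\leq \lambda d(u,x)+(1-\lambda)d(u,y)$ for all $u\in X$.

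For the inclusion $Fix(T)\subseteq Fix(T_{\lambda})$, I would take $x\in Fix(T)$, so that $Tx=x$ and hence $T_{\lambda}x=W(x,Tx;\lambda)=W(x,x;\lambda)$. Applying the inequality above with $y$ replaced by $x$ and with $u=x$ gives $d(x,W(x,x;\lambda))\leq \lambda d(x,x)+(1-\lambda)d(x,x)=0$, whence $W(x,x;\lambda)=x$ by the metric axioms, so $T_{\lambda}x=x$. Note this direction uses only $\lambda\in[0,1]$.

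For the reverse inclusion, I would take $x\in Fix(T_{\lambda})$, i.e. $W(x,Tx;\lambda)=x$, and apply the same inequality with $y=Tx$ and with the auxiliary point $u=Tx$: this yields $d(Tx,W(x,Tx;\lambda))\leq \lambda d(Tx,x)+(1-\lambda)d(Tx,Tx)=\lambda\, d(Tx,x)$. Substituting $W(x,Tx;\lambda)=x$ on the left-hand side gives $d(Tx,x)\leq \lambda\, d(x,Tx)$, that is $(1-\lambda)d(x,Tx)\leq 0$; since $\lambda\in[0,1)$ we conclude $d(x,Tx)=0$, i.e. $Tx=x$ and $x\in Fix(T)$. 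Combining the two inclusions yields $Fix(T)=Fix(T_{\lambda})$.

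The argument is short, and the only real point to get right is the choice of the auxiliary point $u$ in the inequality — $u=x$ for the forward inclusion and $u=Tx$ for the backward one — together with the observation that the strict inequality $\lambda<1$ is exactly what is needed (and is needed only) in the second step, to cancel the factor $(1-\lambda)$ and force $d(x,Tx)=0$.
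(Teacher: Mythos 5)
Your proof is correct, and it is the standard argument for this lemma (the paper itself gives no proof, citing Berinde and P\u{a}curar instead): both inclusions follow from the defining inequality of the convex structure with the auxiliary point chosen as $u=x$ and $u=Tx$ respectively, and you correctly isolate the role of $\lambda<1$ in the reverse inclusion, which is exactly where $Fix(T_{\lambda})\subseteq Fix(T)$ would fail for $\lambda=1$ (there $T_{1}$ is the identity). Nothing to add.
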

\begin{remark}\rm\label{main rem}
  Since, convex structure $W$ is a continuous mapping on $X \times X \times [0,1]$, then by (\ref{2}), we can say that $T_{\lambda }$ is also a continuous mapping on $X$.
\end{remark}

It is a matter of great interest to study the necessary conditions which
ensure the existence and uniqueness of the solution of an equation (\ref{FPP}%
) where the mapping is defined on a space equipped with some order
structure in addition to the distance structure. Existence of fixed points
in partially ordered metric spaces was first investigated in 2004 by Ran and
Reurings \cite{ran} and then by Nieto and  L\'{o}pez \cite{nieto}. These results
weakened the underlying spaces and added a new dimension to metric fixed
point theory: the bridging between to metric and ordered fixed point
theories. \newline
In 2017, Eshaghi et al. \cite{ortho}, gave the idea of orthogonal relation
on metric spaces which was generalized by Rahimi et al. \cite{R metric}, in
2020, by introducing the concept of $\mathfrak{R}$-metric spaces, where $\mathfrak{R}$ is an arbitrary relation on a metric space $X$. \newline
We now give the notion of $\mathfrak{R}$-convex metric space as follows:

\begin{definition}\rm
Suppose $(X,d,W)$ is a convex metric space and $\mathfrak{R}$ is an
arbitrary relation on $X$. Then $(X,d,W,\mathfrak{R})$ is called an $\mathfrak{R}$-convex metric space.
\end{definition}

\begin{example}\rm
Let $X=\mathbb{R}\times \mathbb{R}$, $a=(\alpha ,\beta )$, and $b=(\zeta
,\eta )$ be two arbitrary points in $X$. Define a relation $\mathfrak{R}%
:=\leq $ on $X$ as $a\leq b$ if and only if $\alpha \leq \zeta $ and $\beta
\leq \eta $. Let $d:X\times X\rightarrow \mathbb{R}$ be the metric given by
\begin{equation*}
d(a,b)=\left\{
\begin{array}{cc}
\sqrt{(\alpha -\zeta )^{2}+(\beta -\eta )^{2}} & ;if\ \ a\mathfrak{R}b \\
\max \{|\alpha -\zeta |,|\beta -\eta |\} & ;otherwise.%
\end{array}%
\right.
\end{equation*}
Convex structure $W$ on $X$ is given by $W(a,b;\lambda )=(\lambda \alpha
+(1-\lambda )\zeta ,\lambda \beta +(1-\lambda )\eta )$. Then one can easily
see that $X$ is an $\mathfrak{R}$-convex metric space.
\end{example}

\begin{definition}\rm
A sequence $\{x_{n}\}$ in an $\mathfrak{R}$-convex metric space is called an
$\mathfrak{R}$-sequence if $x_{n}\mathfrak{R}x_{n+k}$ for each $n,k \in
\mathbb{N}$.
\end{definition}

\begin{definition}\rm
An $\mathfrak{R}$-sequence $\{x_{n}\}$ is said to converges to $x\in X$ if
for every $\epsilon >0$ and $n\in \mathbb{N,}$ there is an integer $n_{0}$
such that $d(x_{n},x)<\epsilon $ for $n\geq n_{0}$.
\end{definition}

\begin{definition}\rm
An $\mathfrak{R}$-sequence $\{x_{n}\}$ is said to be an $\mathfrak{R}$%
-Cauchy sequence if for every $\epsilon >0$ and $m,n\in \mathbb{N}$, there
is an integer $n_{0}$ such that $d(x_{n},x_{m})\leq \epsilon $ for $n,m\geq
n_{0}$.
\end{definition}

\begin{definition}\rm
Let $(X,d,W,\mathfrak{R})$ be an $\mathfrak{R}$-convex metric space. Then it
is called an $\mathfrak{R}$-complete convex metric space if every $\mathfrak{R}$-Cauchy sequence converges in $X$.
\end{definition}

Employing the ideas of Berinde and  P\u{a}curar \cite{main cite}, Rizwan and Abbas \cite{AnjumNew}, Karapinar \cite{Karapinar} and Rahimi \cite{R metric}, we introduce the concept of $\mathfrak{R}$-enriched interpolative Kannan pair and prove some fixed point results in
the setting of $\mathfrak{R}$-complete convex metric spaces. Moreover, we
study the well-posedness, limit shadowing property and Ulam-Hyers stability
of the mappings introduced herein.
\section{Main Results}

We first introduce the following concept.

\begin{definition}\rm
Let $(X,d,W,\mathfrak{R})$ be an $\mathfrak{R}$-convex metric space. A pair of mappings $%
T,S:X\rightarrow X$ is said to be enriched interpolative Kannan
pair if there exist  $a,\lambda\in \lbrack 0,1)$ and $\alpha \in (0,1)$ such that
\begin{equation}
d(W(x,Tx;\lambda ),W(y,Sy;\lambda ))\leq a[d(x,W(x,Tx;\lambda ))]^{\alpha
}[d(y,W(y,Sy;\lambda ))]^{1-\alpha },  \label{main def}
\end{equation}%
holds for all $x\mathfrak{R}y$, where $x,y\in X$.\newline
To specify the parameters $a,\lambda$ and $\alpha$ and relation $\mathfrak{R}$ in (\ref{main def}), we also call  $(T,S)$ a $(a,\alpha ,\lambda ,\mathfrak{R})$-enriched interpolative Kannan pair.
\end{definition}
We start with the following common fixed point result.

\begin{theorem}
\label{main thm} Suppose that $(X,d,W,\mathfrak{R})$ is an $\mathfrak{R}$-complete convex metric space and $(T,S)$ is  $(a,\alpha ,\lambda,\mathfrak{R})$-enriched interpolative Kannan pair.
Suppose the following conditions hold:
\begin{enumerate}
  \item there exists $x_{0}\in X$ such that $x_{0}\mathfrak{R}W(x_{0},Tx_{0};\lambda),$
  \item if for all $x,y\in X$ such that $x\mathfrak{R}y,$
  \begin{equation}\label{bgt}
\Rightarrow \ \ \ \    W(x,Tx;\lambda) \mathfrak{R} W(y,Sy;\lambda) \ \ \text{or} \ \     W(x,Sx;\lambda) \mathfrak{R} W(y,Ty;\lambda).
  \end{equation}
\end{enumerate}
Then $S$ and $T$ have common fixed point $p\in X.$ Moreover, the iterative sequence $%
\{x_{n}\}_{n=0}^{\infty }$ defined by
\begin{equation}
x_{2n+1}=W(x_{2n},Tx_{2n};\lambda ), \ \ \ x_{2n+2}=W(x_{2n+1},Sx_{2n+1};\lambda )\ \ n\geq 0, \label{001}
\end{equation}%
converges to the point $p.$
\end{theorem}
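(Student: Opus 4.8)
The plan is to reduce the problem to a Banach-type contraction argument for the two ``averaged'' maps $T_\lambda,S_\lambda:X\to X$, $T_\lambda x=W(x,Tx;\lambda)$ and $S_\lambda x=W(x,Sx;\lambda)$ (cf.\ (\ref{2})), and then to combine $\mathfrak{R}$-completeness with the continuity of $T_\lambda,S_\lambda$ recorded in Remark \ref{main rem}. By Lemma \ref{fix} we have $Fix(T)=Fix(T_\lambda)$ and $Fix(S)=Fix(S_\lambda)$, so it is enough to find $p$ with $T_\lambda p=p=S_\lambda p$. In this notation the iteration (\ref{001}) is simply $x_{2n+1}=T_\lambda x_{2n}$ and $x_{2n+2}=S_\lambda x_{2n+1}$, while (\ref{main def}) reads
\[
d(T_\lambda x,S_\lambda y)\le a\,[d(x,T_\lambda x)]^{\alpha}[d(y,S_\lambda y)]^{1-\alpha}\qquad\text{whenever } x\mathfrak{R}y .
\]

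First I would show, by induction on $n$, that $x_n\mathfrak{R}x_{n+1}$ for all $n\ge0$ (so $\{x_n\}$ is an $\mathfrak{R}$-sequence): the base case $x_0\mathfrak{R}x_1$ is hypothesis (1), and the inductive step follows from the implication (\ref{bgt}) once one observes that $T_\lambda x_{2n}=x_{2n+1}$, $S_\lambda x_{2n+1}=x_{2n+2}$, etc., so that at each parity the relevant disjunct of (\ref{bgt}) is exactly $x_{n+1}\mathfrak{R}x_{n+2}$. Writing $d_n=d(x_n,x_{n+1})$, I would first dispose of the degenerate case: if $d_N=0$ for some $N$, then $x_N$ is a fixed point of whichever of $T_\lambda,S_\lambda$ produced $x_{N+1}$; plugging $d_N=0$ into the displayed inequality for the pair producing $x_{N+1},x_{N+2}$ forces $d_{N+1}=0$, and inductively $d_m=0$ for all $m\ge N$, so the sequence is eventually equal to a point of $Fix(T_\lambda)\cap Fix(S_\lambda)=Fix(T)\cap Fix(S)$ and we are done. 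Hence assume $d_n>0$ for every $n$. Applying the displayed inequality to the consecutive pair producing $x_{n+1},x_{n+2}$ (legitimate by the induction above) gives $d_{n+1}\le a\,d_n^{\theta}\,d_{n+1}^{1-\theta}$ with $\theta=\alpha$ on odd steps and $\theta=1-\alpha$ on even steps; dividing by $d_{n+1}^{1-\theta}>0$ and raising to the appropriate power yields $d_{n+1}\le k\,d_n$ with $k:=\max\{a^{1/\alpha},a^{1/(1-\alpha)}\}<1$, so $d_n\le k^{\,n}d_0$.

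From $d_n\le k^n d_0$ and the triangle inequality, $d(x_n,x_m)\le\sum_{j=n}^{m-1}d_j\le k^n d_0/(1-k)\to0$, so $\{x_n\}$ is an $\mathfrak{R}$-Cauchy sequence; by $\mathfrak{R}$-completeness $x_n\to p$ for some $p\in X$. Since $T_\lambda$ and $S_\lambda$ are continuous (Remark \ref{main rem}), letting $n\to\infty$ in $x_{2n+1}=T_\lambda x_{2n}$ and $x_{2n+2}=S_\lambda x_{2n+1}$ gives $p=T_\lambda p$ and $p=S_\lambda p$ (both subsequences converge to $p$, and metric limits are unique). By Lemma \ref{fix}, $p\in Fix(T)\cap Fix(S)$, and $x_n\to p$ by construction, which is the assertion.

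I expect the only delicate point to be the relation bookkeeping in the second paragraph: one must verify that the disjunctive hypothesis (\ref{bgt}) really does feed the needed instance ``$x\mathfrak{R}y$'' of (\ref{main def}) for the pair $(x_n,x_{n+1})$ taken in the correct order, since the $T$-slot and the $S$-slot of (\ref{main def}) play asymmetric roles. Everything else — the interpolative division (after peeling off the $d_n=0$ case), the geometric estimate, and the limit passage via continuity and Lemma \ref{fix} — is routine.
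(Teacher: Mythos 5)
Your proposal is correct and follows essentially the same route as the paper's own proof: reduce to the averaged maps $T_\lambda,S_\lambda$ via Lemma \ref{fix}, show $\{x_n\}$ is an $\mathfrak{R}$-sequence from hypotheses (1)--(2), cancel in the interpolative inequality to get geometric decay of $d(x_n,x_{n+1})$, conclude Cauchyness and convergence by $\mathfrak{R}$-completeness, and pass to the limit using the continuity of $T_\lambda$ from Remark \ref{main rem}. If anything you are more careful than the paper, which after cancelling asserts $d(x_n,x_{n+1})\le a^{n}d(x_0,x_1)$ rather than your correct ratio $k=\max\{a^{1/\alpha},a^{1/(1-\alpha)}\}$, and which leaves both the degenerate case and the order-of-relation bookkeeping for the pair $(x_{2n},x_{2n-1})$ exactly as implicit as the delicate point you flag.
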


\begin{proof}
Using Lemma \ref{fix}, the condition (\ref{main def}) is transformed to
following equivalent form
\begin{equation}
d(T_{\lambda }x,S_{\lambda }y)\leq a[d(x,T_{\lambda }x)]^{\alpha
}[d(y,S_{\lambda }y)]^{1-\alpha },  \label{e form}
\end{equation}%
holds for all $x\mathfrak{R}y.$ That is, a pair $(T_{\lambda },S_{\lambda })$ becomes an interpolative
Kannan pair. Note that the equation (\ref{001}) becomes
\begin{align*}
x_{2n+1}& =T_{\lambda }x_{2n}  \\
x_{2n+2}& =S_{\lambda }x_{2n+1},\ \ n\geq 0.
\end{align*}%
Without any loss of generality, we assume that the successive terms of $%
\{x_{n}\}$ are distinct. Otherwise, we are done. Since, $x_{0}\mathfrak{R} T_{\lambda}x_{0}=x_{1}$  by (\ref{bgt}), we have
\begin{eqnarray*}
(x_{1}=T_{\lambda}x_{0})\mathfrak{R} (S_{\lambda}x_{1}=x_{2}).
\end{eqnarray*}%
Continuing this way, we obtain that
\begin{equation*}
x_{1}\mathfrak{R}x_{2}\mathfrak{R}\dotsc \mathfrak{R}x_{n}\mathfrak{R}x_{n+1}%
\mathfrak{R}\dotsc\text{.}
\end{equation*}%
Thus $\{x_{n}\}_{n=0}^{\infty}$ is an $\mathfrak{R}$-sequence in $(X,d,W,\mathfrak{R})$.%
\newline
Take $x=x_{2n}$ and $y=x_{2n+1}$ in (\ref{e form}), we get
\begin{align}
d(x_{2n+1},x_{2n+2})& =d(T_{\lambda }x_{2n},S_{\lambda }x_{2n+1})  \notag
\label{03} \\
& \leq a[d(x_{2n},T_{\lambda }x_{2n})]^{\alpha }[d(x_{2n+1},S_{\lambda
}x_{2n+1})]^{1-\alpha }  \notag \\
& \leq a[d(x_{2n},x_{2n+1})]^{\alpha }[d(x_{2n+1},x_{2n+2})]^{1-\alpha }
\notag \\
\lbrack d(x_{2n+1},x_{2n+2})]^{\alpha }& \leq a[d(x_{2n},x_{2n+1})]^{\alpha }
\notag \\
& \leq ad(x_{2n},x_{2n+1})  \notag \\
& \leq a^{2n+1}d(x_{0},x_{1}).
\end{align}%
Similarly, for $x=x_{2n}$ and $y=x_{2n-1}$ in (\ref{e form}), we obtain
that
\begin{align}
d(x_{2n+1},x_{2n})& =d(T_{\lambda }x_{2n},S_{\lambda }x_{2n-1})  \notag
\label{04} \\
& \leq a[d(x_{2n},T_{\lambda }x_{2n})]^{\alpha }[d(x_{2n-1},S_{\lambda
}x_{2n-1})]^{1-\alpha }  \notag \\
& \leq a[d(x_{2n},x_{2n+1})]^{\alpha }[d(x_{2n-1},x_{2n})]^{1-\alpha }
\notag \\
\lbrack d(x_{2n+1},x_{2n})]^{1-\alpha }& \leq
a[d(x_{2n},x_{2n-1})]^{1-\alpha }  \notag \\
& \leq ad(x_{2n},x_{2n-1})  \notag \\
& \leq a^{2n}d(x_{0},x_{1}).
\end{align}%
It follows from(\ref{03}) and (\ref{04}) that
\begin{equation*}
d(x_{n},x_{n+1})\leq a^{n}d(x_{0},x_{1}).
\end{equation*}%
For $r>0$, we have
\begin{align*}
d(x_{n},x_{n+r})& \leq
d(x_{n},x_{n+1})+d(x_{n+1},x_{n+2})+d(x_{n+2},x_{n+3})+\dotsc+
\\
& \ \ \ \ \ \ d(x_{n+r-1},x_{n+r})\\
& \leq (a^{n}+a^{n+1}+a^{n+2}+\dotsc+a^{n+r-1})d(x_{0},x_{1}) \\
& \leq \frac{a^{n}}{1-a}d(x_{0},x_{1}).
\end{align*}%
On taking limit as $n\rightarrow \infty $, we can deduce that $\{x_{n}\}_{n=0}^{\infty}$ is
an $\mathfrak{R}$-Cauchy sequence. Since $X$ is $\mathfrak{R}$-complete,
there exists $p\in X$ such that $\lim_{n\rightarrow \infty }x_{n}=p$ which implies
\begin{equation}\label{qwer}
  \lim_{n\rightarrow \infty }x_{2n}=p.
\end{equation}
By using (\ref{qwer}) and the $\mathfrak{R}$-continuity of  $T_{\lambda}$   (which follows from Remark \ref{main rem} and Definition \ref{cpo}), we immediately obtain
\begin{equation*}
  \lim_{n\rightarrow \infty }T_{\lambda}x_{2n}=T_{\lambda}p.
\end{equation*}
Therefore,
\begin{equation*}
 T_{\lambda}p=T_{\lambda}(\lim_{n\rightarrow \infty }x_{2n})=\lim_{n\rightarrow \infty }T_{\lambda}(x_{2n})=\lim_{n\rightarrow \infty }x_{2n+1}=p,
\end{equation*}
i.e.,
\begin{equation*}
  p\in Fix(T_{\lambda})=Fix(T).
\end{equation*}
 On the similar arguments $p$ is the fixed point of $S$ as well.   \newline

\end{proof}
 For more illustration, consider the following examples.

\begin{example}\rm
Let $X=\mathbb{R}^{2}$ and $D=\{(x,y)\in \mathbb{R}^{2}:\ \ x=y\}$. For $x=(x_{1},x_{2})$, $y=(y_{1},y_{2})$ in $X$ and $\alpha
\in \lbrack 0,1]$. Define a mapping $W:X\times X\times \lbrack
0,1]\rightarrow X$ by
\begin{equation*}
W(x,y,\alpha )=(\alpha x_{1}+(1-\alpha )y_{1},\alpha x_{2}+(1-\alpha )y_{2}),
\end{equation*}%
and a metric $d:X\times X\rightarrow \lbrack 0,\infty )$ by $%
d(x,y)=|x_{1}-y_{1}|+|x_{2}-y_{2}|$. \newline
Define a relation $\mathfrak{R}$ on $X$ as $x\mathfrak{R}y$ if and only if $%
x,y\in D$. It can be easily seen that the $(X,d,W,\mathfrak{R})$ is an $\mathfrak{R}$-complete convex metric
space.  Define the mappings $T,S:X\rightarrow X$ by
\begin{equation*}
T(x)=%
\begin{cases}
(x_{1},2x_{1}-x_{2}) & \text{if $x=(x_{1},x_{2})\notin D$} \\
(-x_{1},-x_{2}) & \text{if $x=(x_{1},x_{2})\in D,$}%
\end{cases}%
\end{equation*}
and
\begin{equation*}
S(x)=%
\begin{cases}
(6x_{2}-x_{1},5x_{2}) & \text{if $x=(x_{1},x_{2})\notin D$} \\
(-x_{1},-x_{2}) & \text{if $x=(x_{1},x_{2})\in D.$}
\end{cases}%
\end{equation*}
On the other hand, we have
\begin{equation}\label{234e}
W(x,Tx;0.5)=%
\begin{cases}
(x_{1},x_{1}) & \text{if $x=(x_{1},x_{2})\notin D$} \\
(0,0) & \text{if $x=(x_{1},x_{2})\in D,$}
\end{cases}%
\end{equation}
and
\begin{equation}\label{xe}
W(x,Sx;0.5)=%
\begin{cases}
(3x_{2},3x_{2}) & \text{if $x=(x_{1},x_{2})\notin D$} \\
(0,0) & \text{if $x=(x_{1},x_{2})\in D.$}
\end{cases}%
\end{equation}
If we choose $x_{0}=(0,0),$ then
\begin{equation*}
  (0,0)\mathfrak{R}W(x_{0},Tx_{0},0.5)=(0,0).
\end{equation*}
By using (\ref{234e}), (\ref{xe}) and $x\mathfrak{R}y$ we have
\begin{eqnarray*}
(0,0)=W(x,Tx,0.5)&\mathfrak{R}&W(y,Sy,0.5)=(0,0), \text{ or} \\
(0,0)=W(x,Sx,0.5)&\mathfrak{R}&W(y,Ty,0.5)=(0,0).
\end{eqnarray*}%
As a result,  the  assumptions i) and ii) of Theorem \ref{main thm} hold.

Note that $(T,S)$ is an $(0.5,0.5 ,0.5,\mathfrak{R})$-enriched
interpolative Kannan pair.\newline
Indeed, for $x=(x_{1},x_{2}) \mathfrak{R} y=(y_{1},y_{2}),$ we have
\begin{eqnarray*}
  d (W(x,Tx;0.5),W(y,Sy;0.5))&=& 0,  \\
  d(x,W(x,Tx;0.5))&=& 2|x|, \text{ and} \\
  d(y,W(y,Sy;0.5))&=& 2|y|.
\end{eqnarray*}%
Therefore, for all $x\mathfrak{R}Sy$ we have
\begin{align*}
d(W(x,Tx;\lambda ),W(y,Sy;\lambda ))&=0    \\
   & \leq a[d(x,W(x,Tx;\lambda ))]^{0.5
}[d(y,W(y,Sy;\lambda ))]^{0.5 }\\
&=0.5(2|x|)^{0.5}(2|y|)^{0.5}.
\end{align*}
Thus, all the conditions of Theorem \ref{main thm}
are satisfied. Hence, $x=(0,0)$ is the common fixed point of $T$ and $S.$
\end{example}

\begin{corollary}\label{application}
  Let $(X,d,W)$ be a complete convex metric space and $(T,S)$ be $(a, \alpha, \lambda)$-enriched interpolative Kannan pair. Then the pair $(T,S)$ has a unique common fixed point.
\end{corollary}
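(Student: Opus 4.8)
The plan is to deduce Corollary \ref{application} from Theorem \ref{main thm} by taking $\mathfrak{R}$ to be the \emph{full} relation on $X$, i.e. $x\mathfrak{R}y$ for every $x,y\in X$. With this choice the $\mathfrak{R}$-prefixed notions collapse to the ordinary ones: every sequence is an $\mathfrak{R}$-sequence, an $\mathfrak{R}$-Cauchy sequence is a Cauchy sequence, $\mathfrak{R}$-completeness coincides with completeness, and an $(a,\alpha,\lambda)$-enriched interpolative Kannan pair is exactly an $(a,\alpha,\lambda,\mathfrak{R})$-enriched interpolative Kannan pair for this $\mathfrak{R}$. Thus $(X,d,W,X\times X)$ is an $\mathfrak{R}$-complete convex metric space on which $(T,S)$ is an $(a,\alpha,\lambda,\mathfrak{R})$-enriched interpolative Kannan pair.

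Next I would verify that hypotheses (i) and (ii) of Theorem \ref{main thm} hold automatically. For (i) any $x_{0}\in X$ qualifies, since $x_{0}\mathfrak{R}W(x_{0},Tx_{0};\lambda)$ is true by the choice of $\mathfrak{R}$; for (ii) the implication (\ref{bgt}) is trivially valid because all pairs of points are $\mathfrak{R}$-related, so in particular its conclusion holds. Theorem \ref{main thm} then applies and delivers a point $p\in X$ with $Tp=p=Sp$, which is moreover the limit of the iteration (\ref{001}); this settles existence.

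Uniqueness is the only step requiring an argument, and I expect it to be short. By Lemma \ref{fix} the contractive inequality (\ref{main def}) is equivalent to (\ref{e form}); since $\mathfrak{R}=X\times X$ this gives
\[
d(T_{\lambda }x,S_{\lambda }y)\leq a\,[d(x,T_{\lambda }x)]^{\alpha }\,[d(y,S_{\lambda }y)]^{1-\alpha }\qquad\text{for all }x,y\in X
\]
(the definition places no restriction excluding fixed points). If $p$ and $q$ are common fixed points of $T$ and $S$, then by Lemma \ref{fix} one has $T_{\lambda }p=p$ and $S_{\lambda }q=q$, so $d(p,T_{\lambda }p)=d(q,S_{\lambda }q)=0$; substituting $x=p$, $y=q$ and using the convention $0^{t}=0$ for $t\in(0,1)$ (legitimate since $\alpha,1-\alpha\in(0,1)$) yields $d(p,q)\leq a\cdot 0^{\alpha}\cdot 0^{1-\alpha}=0$, whence $p=q$. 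The delicate point — such as it is — is precisely that this last step needs (\ref{e form}) to be available at the pair $(p,q)$ of fixed points, which is exactly why the total relation is used: for a genuine subrelation distinct common fixed points need not be $\mathfrak{R}$-comparable and the above shortcut is unavailable. No further obstacle arises.
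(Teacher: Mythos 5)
Your proposal is correct and follows essentially the same route as the paper: specialize Theorem \ref{main thm} to the total relation $\mathfrak{R}=X\times X$ for existence, then derive uniqueness from the equivalent form (\ref{e form}) applied to two common fixed points $p,q$, using $d(p,T_{\lambda}p)=d(q,S_{\lambda}q)=0$. Your write-up is in fact slightly more careful than the paper's, since you explicitly check hypotheses (i) and (ii) and note that the definition of the enriched pair does not exclude fixed points from the contractive inequality.
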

\begin{proof}
  Define the relation $\mathfrak{R}$ on $X$ as $\mathfrak{R}= X\times X$ in Theorem \ref{main thm}, then the result follows from Theorem \ref{main thm} . To prove the uniqueness, let $p$ and $q$ be two distinct common fixed points of the mappings $T$ and $S$ and by Lemma \ref{fix}, also of $T_{\lambda}$ and $S_{\lambda}$, then by (\ref{e form})
\begin{equation*}
  d(p,q)=d(T_{\lambda}p,S_{\lambda}q)\leq a[d(p,T_{\lambda}p)]^{\alpha}[d(q,S_{\lambda}q)]^{1-\alpha}=0
\end{equation*}
Hence, $p=q$.
\end{proof}
\begin{corollary}
\label{cor1} Let $(X,\left\Vert \cdot \right\Vert )$ be Banach space and $T,S:X\rightarrow X$  be a mappings satisfying
\begin{equation}\label{rizwananju}
\left\Vert b(x-y)+(Tx-Sy)\right\Vert \leq a\left\Vert x-Tx\right\Vert
^{\alpha }\left\Vert y-Sy\right\Vert ^{1-\alpha },
\end{equation}%
is satisfied for all  $x,y\in X$ such that $Tx\neq x$ whenever $Sy\neq y,$ with $b\in[0,\infty),$ $a\in[0,1)$ and $\alpha\in(0,1).$ Then $S$ and $T$ have a unique common fixed point.
\end{corollary}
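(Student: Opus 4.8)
The plan is to obtain this statement as a direct specialization of Corollary~\ref{application}, exploiting the fact that a Banach space $(X,\norm{\cdot})$ is a complete convex metric space for the natural convex structure $W(x,y;\lambda)=\lambda x+(1-\lambda)y$ together with the metric $d(x,y)=\norm{x-y}$. Thus it suffices to find admissible parameters $a\in[0,1)$, $\alpha\in(0,1)$ and $\lambda\in[0,1)$ for which the enriched interpolative Kannan condition~(\ref{main def}) for the pair $(T,S)$ in this convex metric space is equivalent to the assumed inequality~(\ref{rizwananju}); once this is done, Corollary~\ref{application} (with $a$ and $\alpha$ inherited from~(\ref{rizwananju})) immediately yields the existence and uniqueness of a common fixed point of $T$ and $S$.

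The crucial point is the choice of $\lambda$ in terms of the constant $b\ge 0$: take $\lambda=\frac{b}{b+1}$, so that $\lambda\in[0,1)$ and $1-\lambda=\frac{1}{b+1}$. With this choice one computes, for all $x,y\in X$,
\[
W(x,Tx;\lambda)-W(y,Sy;\lambda)=\lambda(x-y)+(1-\lambda)(Tx-Sy)=\frac{1}{b+1}\big(b(x-y)+(Tx-Sy)\big),
\]
and likewise $x-W(x,Tx;\lambda)=(1-\lambda)(x-Tx)=\frac{1}{b+1}(x-Tx)$ and $y-W(y,Sy;\lambda)=\frac{1}{b+1}(y-Sy)$. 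Taking norms, the left-hand side of~(\ref{main def}) equals $\frac{1}{b+1}\norm{b(x-y)+(Tx-Sy)}$, while $d(x,W(x,Tx;\lambda))=\frac{1}{b+1}\norm{x-Tx}$ and $d(y,W(y,Sy;\lambda))=\frac{1}{b+1}\norm{y-Sy}$.

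Substituting these identities into~(\ref{main def}) and using $\frac{1}{b+1}=\big(\frac{1}{b+1}\big)^{\alpha}\big(\frac{1}{b+1}\big)^{1-\alpha}$, the common factor $\frac{1}{b+1}$ cancels from both sides, so~(\ref{main def}) reduces precisely to~(\ref{rizwananju}); the restriction imposed on the pair $(x,y)$ in~(\ref{rizwananju}) corresponds, through Lemma~\ref{fix}, to the exclusion of fixed points of $T_{\lambda}$ and $S_{\lambda}$ that is already implicit in the interpolative Kannan scheme used in the proof of Theorem~\ref{main thm}. Hence~(\ref{rizwananju}) is equivalent to saying that $(T,S)$ is an $(a,\alpha,\lambda)$-enriched interpolative Kannan pair on the complete convex metric space $(X,\norm{\cdot},W)$, and Corollary~\ref{application} applies. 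I do not anticipate a genuine obstacle: the only steps requiring care are verifying the three displayed norm identities and noting that the degenerate case $b=0$ (which gives $\lambda=0$, $W(x,Tx;0)=Tx$, and reduces~(\ref{rizwananju}) to a Noorwali-type interpolative Kannan condition) is covered without modification.
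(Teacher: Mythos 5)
Your proposal is correct and follows essentially the same route as the paper: reduce (\ref{rizwananju}) to the interpolative Kannan condition for the averaged pair $(T_{\lambda},S_{\lambda})$ with $T_{\lambda}x=\tfrac{b}{b+1}x+\tfrac{1}{b+1}Tx$, cancel the common factor $\tfrac{1}{b+1}$ using $\tfrac{1}{b+1}=\bigl(\tfrac{1}{b+1}\bigr)^{\alpha}\bigl(\tfrac{1}{b+1}\bigr)^{1-\alpha}$, and invoke Corollary~\ref{application}. The only difference is notational: the paper writes $\lambda=\tfrac{1}{b+1}$ with the convention $W(x,y;\alpha)=(1-\alpha)x+\alpha y$, while you take $\lambda=\tfrac{b}{b+1}$ with the paper's standard structure $W(x,y;\lambda)=\lambda x+(1-\lambda)y$; both give the same operator, and your handling of the fixed-point exclusion is no less careful than the paper's own.
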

\begin{proof}
     We choose $\lambda=\frac{1}{b+1},$ then condition (\ref{rizwananju}) becomes
     \begin{equation*}
       \left\Vert (1-\lambda )(x-y)+\lambda (Tx-Sy)\right\Vert \leq a\lambda
\left\Vert x-Tx\right\Vert ^{\alpha }\left\Vert y-Sy\right\Vert ^{1-\alpha } \ \ \ \forall x,y\in X.
     \end{equation*}
     which can be written in an equivalent form as:
     \begin{equation}\label{bgtre}
         \left\Vert T_{\lambda}x-S_{\lambda}y\right\Vert \leq a\left\Vert x-T_{\lambda}x\right\Vert
^{\alpha }\left\Vert y-S_{\lambda}y\right\Vert ^{1-\alpha }, \ \ \ \forall x,y\in X.
     \end{equation}
 If we define convex structure $W$ as $W(x,y;\alpha)=(1-\alpha)x+\alpha y $ on a Banach space $X$ equipped with a relation $\mathfrak{R}=X\times X.$ Since $X$ is Banach space, it is $\mathfrak{R}$-Banach convex space.
The inequality (\ref{bgtre}) and we have that the mappings $S_{\lambda},T_{\lambda}:X\rightarrow X$
defined by (\ref{2})   suggest that $(T,S)$ a $(a,\alpha ,\lambda ,\mathfrak{R})$-enriched interpolative Kannan pair.
Corollary \ref{application} leads to the conclusion.

\end{proof}

As a corollary of our result (Corollary \ref{cor1}), we can obtain Theorem 2.1 of \cite{B6}.
\begin{corollary}\cite{B6}
  Let $(X,\left\Vert \cdot \right\Vert )$ be a Banach space and $%
T:X\rightarrow X$ be an $(b,a)$-enriched Kannan contraction, that is a
mapping satisfying:
\begin{equation} \label{EnrichedKannan}
||b(x-y)+Tx-Ty||\leq a\big\{ ||x-Tx||+||y-Ty||\big\} \ \forall \ x,y\in X,
\end{equation}
with $b\in [0,\infty)$ and $a\in[0,1/2).$ Then, $T$ has a unique fixed point.
\end{corollary}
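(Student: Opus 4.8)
The plan is to derive this result as a direct specialization of Corollary \ref{cor1}. First I would observe that an $(b,a)$-enriched Kannan contraction in the sense of (\ref{EnrichedKannan}) is precisely the symmetric, ``interpolation-free'' version of condition (\ref{rizwananju}): setting $S=T$ collapses $Sy$ to $Ty$, so the left-hand side $\norm{b(x-y)+(Tx-Sy)}$ becomes $\norm{b(x-y)+Tx-Ty}$, which is exactly the left-hand side of (\ref{EnrichedKannan}).

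The only genuine point requiring attention is the right-hand side: (\ref{rizwananju}) has the interpolative product $a\norm{x-Tx}^{\alpha}\norm{y-Sy}^{1-\alpha}$ with $a\in[0,1)$, whereas (\ref{EnrichedKannan}) has the additive form $a\{\norm{x-Tx}+\norm{y-Ty}\}$ with $a\in[0,1/2)$. So the step I would carry out is to show that a mapping satisfying (\ref{EnrichedKannan}) with constant $a<1/2$ also satisfies (\ref{rizwananju}) with $S=T$ and some admissible constant and exponent. The natural choice is $\alpha=1/2$, for which the AM--GM inequality gives $\norm{x-Tx}^{1/2}\norm{y-Ty}^{1/2}\le \tfrac12\big(\norm{x-Tx}+\norm{y-Ty}\big)$; hence
\begin{equation*}
a\big\{\norm{x-Tx}+\norm{y-Ty}\big\}\le 2a\,\norm{x-Tx}^{1/2}\norm{y-Ty}^{1/2},
\end{equation*}
which unfortunately goes the wrong way. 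The main obstacle is therefore this mismatch in the direction of AM--GM, and the way around it is to run the argument of Corollary \ref{cor1} directly rather than quoting (\ref{rizwananju}) verbatim: one sets $\lambda=\tfrac{1}{b+1}$ so that (\ref{EnrichedKannan}) is equivalent, after dividing by $b+1$, to the averaged-map Kannan inequality $\norm{T_{\lambda}x-T_{\lambda}y}\le a\{\norm{x-T_{\lambda}x}+\norm{y-T_{\lambda}y}\}$ for $T_{\lambda}x=(1-\lambda)x+\lambda Tx$, and then invokes Kannan's classical theorem \cite{Kannan} for $T_{\lambda}$ on the complete space $X$ (or equivalently Corollary \ref{application} with $S=T$ after noting a Kannan map trivially satisfies the interpolative bound on $X\setminus\mathrm{Fix}$), obtaining a unique fixed point $p$ of $T_{\lambda}$.

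Finally I would close the argument by applying Lemma \ref{fix}: since $\mathrm{Fix}(T)=\mathrm{Fix}(T_{\lambda})$ for $\lambda\in[0,1)$ (and here $\lambda=\tfrac{1}{b+1}\in(0,1]$, with the case $b=0$, $\lambda=1$ handled by the fact that then $T_{\lambda}=T$ directly satisfies ordinary Kannan contraction), the unique fixed point $p$ of $T_{\lambda}$ is the unique fixed point of $T$. Uniqueness transfers verbatim. I expect the writing to be two or three short displayed lines; the conceptual content is entirely in recognizing that the enriched Kannan condition is the $S=T$ instance of the enriched interpolative Kannan condition up to replacing the interpolative right-hand side by the additive Kannan one, for which the averaging trick of \cite{main cite} applies unchanged.
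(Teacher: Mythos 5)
Your main line of argument is correct, but it diverges from the paper's own proof at the decisive step, and the divergence matters. The paper, like you, sets $\lambda=\frac{1}{b+1}$ and rewrites (\ref{EnrichedKannan}) as the Kannan inequality (\ref{222}) for $T_{\lambda}$; but it then asserts, citing \cite{N}, that a map satisfying (\ref{222}) also satisfies the interpolative inequality (\ref{bgtre}), and concludes via Corollary \ref{cor1}. You correctly observe that this implication runs against AM--GM: since $\norm{x-T_{\lambda}x}^{1/2}\norm{y-T_{\lambda}y}^{1/2}\le\frac{1}{2}\left(\norm{x-T_{\lambda}x}+\norm{y-T_{\lambda}y}\right)$, an additive Kannan bound does not yield an interpolative one (for instance $Tx=x/4$ on $[0,1]$ is a Kannan contraction with $a=1/3$ yet violates every interpolative bound as $y\to 0$ with $x$ fixed). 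Your replacement --- apply Kannan's classical theorem \cite{Kannan} directly to $T_{\lambda}$ on the complete space $X$ and transfer the unique fixed point back through $Fix(T)=Fix(T_{\lambda})$ --- is sound, is essentially the original argument of \cite{B6}, and in fact repairs the weak link in the paper's reduction; what it gives up is only the formal derivation of this corollary from Corollary \ref{cor1}. Two small cleanups: delete the parenthetical claiming a Kannan map ``trivially satisfies the interpolative bound on $X\setminus Fix(T)$'', since that is exactly the implication your own AM--GM remark refutes; and your handling of $b=0$ (where $\lambda=1$ lies outside the hypothesis of Lemma \ref{fix}, but $T_{\lambda}=T$ under the convention $T_{\lambda}x=(1-\lambda)x+\lambda Tx$ used in the proof of Corollary \ref{cor1}) is a legitimate point the paper does not address.
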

\begin{proof}
 By taking $\lambda=\frac{1}{b+1},$ in (\ref{EnrichedKannan}) we get,
  \begin{equation*}
       \left\Vert (1-\lambda )(x-y)+\lambda (Tx-Ty)\right\Vert \leq a\lambda
\{\left\Vert x-Tx\right\Vert +\left\Vert y-Ty\right\Vert \} \ \ \ \forall x,y\in X,
     \end{equation*}
 which can be written in an equivalent form as:
     \begin{equation}
         \left\Vert T_{\lambda}x-T_{\lambda}y\right\Vert \leq a \{\left\Vert x-T_{\lambda}x\right\Vert +\left\Vert y-T_{\lambda}y\right\Vert \}, \ \ \ \forall x,y\in X.\label{222}
     \end{equation}
Therefore, by (\ref{222}), $T_{\lambda}$ is a Kannan contraction (\ref{kc}). It follows from \cite{N}, mapping $T_{\lambda}$ satisfying  (\ref{222}) also satisfies (\ref{bgtre}). Since for value $\lambda=\frac{1}{b+1},$ the inequality (\ref{bgtre}) is equivalent to (\ref{rizwananju}),  hence result follows from Corollary \ref{cor1}.

\end{proof}
As a corollary of our result (Corollary \ref{cor1}), we can obtain Theorem 2.1 of \cite{N}.
\begin{corollary}\cite{N} \label{cor2}
Let $(X,\left\Vert \cdot \right\Vert )$ be a Banach space and $T,S:X\rightarrow X$ be a mappings satisfying
\begin{equation*}
\left\Vert Tx-Sy\right\Vert \leq a\left\Vert x-Tx\right\Vert ^{\alpha
}\left\Vert y-Sy\right\Vert ^{1-\alpha},
\end{equation*}
 for all $ x,y\in X$ such that $Tx\neq x$ whenever $Ty\neq y,$
with  $a\in[0,1)$ and $\alpha\in(0,1).$
Then $S$ and $T$ have a unique common fixed point.
\end{corollary}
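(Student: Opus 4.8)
The plan is to read Corollary \ref{cor2} as the degenerate case $b=0$ of Corollary \ref{cor1}, which has already been established. First I would substitute $b=0$ into the contractive inequality \eqref{rizwananju}: the term $b(x-y)$ disappears, so the left-hand side $\left\Vert b(x-y)+(Tx-Sy)\right\Vert$ becomes $\left\Vert Tx-Sy\right\Vert$, and \eqref{rizwananju} reduces to
\[
\left\Vert Tx-Sy\right\Vert \leq a\left\Vert x-Tx\right\Vert ^{\alpha }\left\Vert y-Sy\right\Vert ^{1-\alpha },
\]
which is exactly the hypothesis imposed on the pair $(T,S)$ in Corollary \ref{cor2}. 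The admissibility ranges match as well: $b=0\in[0,\infty)$, $a\in[0,1)$, $\alpha\in(0,1)$, and the set on which the inequality is not demanded (points where $Tx\neq x$ whenever the companion mapping does not fix $y$) is the same in both statements up to the obvious symmetry between $T$ and $S$. Hence Corollary \ref{cor1} applies directly and produces a unique common fixed point of $S$ and $T$.

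I would then add a remark explaining why the enriched apparatus genuinely collapses here. In the scheme of the proof of Corollary \ref{cor1} one takes $\lambda=\frac{1}{b+1}$, so $b=0$ forces $\lambda=1$, whence $T_\lambda=T$, $S_\lambda=S$, and the averaged inequality \eqref{bgtre} coincides verbatim with \eqref{rizwananju} at $b=0$; no convex combination is actually formed. The only point worth a word of care is that the $\mathfrak{R}$-enriched interpolative Kannan framework and Lemma \ref{fix} are phrased for $\lambda\in[0,1)$, while this specialization lands on the endpoint $\lambda=1$. Since Corollary \ref{cor1} is stated and proved for the whole range $b\in[0,\infty)$, I would simply invoke it as a black box rather than re-derive anything.

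If a self-contained route is preferred, one can bypass the convexity machinery entirely: the hypothesis of Corollary \ref{cor2} is already a ``pure'' interpolative Kannan pair condition on the Banach space $X$, that is, \eqref{e form} with $T_\lambda,S_\lambda$ replaced by $T,S$. One then repeats the argument of Theorem \ref{main thm} with $\mathfrak{R}=X\times X$ and the natural normed convex structure: the iteration \eqref{001} becomes $x_{2n+1}=Tx_{2n}$, $x_{2n+2}=Sx_{2n+1}$, the same geometric estimates show $\{x_{n}\}$ is Cauchy with limit $p$, a routine passage to the limit in the contractive inequality forces $Tp=p$ and $Sp=p$, and uniqueness follows exactly as in Corollary \ref{application}. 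Either way I do not foresee any real obstacle; the entire content is bookkeeping of the parameters, the only (very mild) subtlety being the endpoint value of $\lambda$ just discussed.
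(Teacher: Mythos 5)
Your proposal takes essentially the same route as the paper, whose entire proof of this corollary is the single line ``For $b=0$, Corollary \ref{cor1} leads to the conclusion.'' Your added observation that $b=0$ forces $\lambda=\frac{1}{b+1}=1$, an endpoint excluded from the range $\lambda\in[0,1)$ assumed in Lemma \ref{fix} and in the definition of the enriched pair, is a real subtlety the paper passes over in silence, and your fallback of running the interpolative Kannan argument directly on $(T,S)$ is the cleanest way to close that small gap.
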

\begin{proof}
For $b=0,$  Corollary \ref{cor1} leads to the conclusion.
\end{proof}

\section{Well-Posedness, Limit Shadowing Property and Ulam-Hyers Stability}
Now we will present the well-posedness, limit shadowing property and Ulam-Hyers stability results for the Corollary \ref{application}.
\subsection{Well-Posedness}
Let us start with the following definition.

\begin{definition}\rm
Let $(X,d,W)$ be a convex metric space and $%
T,S:X\rightarrow X$. The common fixed point problem of a pair $(T,S)$ is
said to be well-posed if $Fix(T)=Fix(S)=\{p\}$ (say) and for any sequence $%
\{x_{n}\}_{n=0}^{\infty}$ in $X$ satisfying%
\begin{eqnarray*}
\lim_{n\rightarrow \infty }d(W(x_{n},Tx_{n},\lambda ),x_{n}) &=&0\text{ and }
\\
\lim_{n\rightarrow \infty }d(x_{n},W(x_{n},Sx_{n},\lambda )) &=&0,
\end{eqnarray*}
we have $\lim_{n\rightarrow \infty }x_{n}=p.$
\end{definition}

Since by Lemma \ref{fix} $Fix(T)=Fix(T_{\lambda })$, and $%
Fix(S)=Fix(S_{\lambda }),$ we conclude that the common fixed point problem
of $(T,S)$ is well-posed if and only if the fixed point problem of $%
(T_{\lambda },S_{\lambda })$ is well-posed.\newline
If we take $S=T$ in the above definition, we have a well-posedness of a
fixed point problem of a mapping $T.$ Well-posedness of certain fixed point
problems has been studied by several mathematicians, see for example, \cite%
{article.5}, \cite{article.11} and references mentioned therein.\newline
We now study the well-posedness of a common fixed point problem of mappings
in Theorem \ref{main thm}.

\begin{theorem}
\label{Well P} Let $(X,d,W)$ be a complete
convex metric space. Suppose that $T,S$ are mappings on $X$ as in the
Corollary \ref{application}. Then, common fixed point problem of a pair $(T,S)$
is well-posed.
\end{theorem}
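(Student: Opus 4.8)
The plan is to reduce the well-posedness of the common fixed point problem for $(T,S)$ to that of the enriched pair $(T_\lambda, S_\lambda)$, exactly as the remark preceding the statement indicates, and then exploit the interpolative Kannan inequality (\ref{e form}) together with the triangle inequality. First I would invoke Corollary \ref{application} to obtain the unique common fixed point $p$, so that $Fix(T)=Fix(S)=Fix(T_\lambda)=Fix(S_\lambda)=\{p\}$ by Lemma \ref{fix}. Then I would take an arbitrary sequence $\{x_n\}$ with $d(x_n, T_\lambda x_n)\to 0$ and $d(x_n, S_\lambda x_n)\to 0$, and aim to show $x_n\to p$.

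The key estimate is to bound $d(x_n,p)$. Using the triangle inequality I would write
\begin{equation*}
d(x_n,p)\leq d(x_n,T_\lambda x_n)+d(T_\lambda x_n, p)=d(x_n,T_\lambda x_n)+d(T_\lambda x_n, S_\lambda p),
\end{equation*}
since $p=S_\lambda p$. Applying (\ref{e form}) with $x=x_n$, $y=p$ gives
\begin{equation*}
d(T_\lambda x_n, S_\lambda p)\leq a\,[d(x_n,T_\lambda x_n)]^{\alpha}[d(p,S_\lambda p)]^{1-\alpha}=a\,[d(x_n,T_\lambda x_n)]^{\alpha}\cdot 0=0,
\end{equation*}
because $d(p,S_\lambda p)=0$. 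Hence $d(x_n,p)\leq d(x_n,T_\lambda x_n)\to 0$, so $x_n\to p$, which is exactly the conclusion. (Symmetrically one could use $d(x_n,S_\lambda x_n)$ and the pair $(T_\lambda p, S_\lambda x_n)$ with the roles reversed.)

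The main subtlety — really the only one — is the applicability of inequality (\ref{e form}): the enriched interpolative Kannan condition in the equivalent form (\ref{e form}) is stated for $x$ with $d(x,T_\lambda x)\neq 0$ and $y$ with $d(y,S_\lambda y)\neq 0$ (mirroring the hypothesis $Tx\neq x$ whenever $Sy\neq y$ in Corollary \ref{cor1}), so one has to handle separately the terms of the sequence for which $x_n=T_\lambda x_n$ or the trivial case; but in that situation the factor $[d(x_n,T_\lambda x_n)]^{\alpha}$ already vanishes and the bound $d(x_n,p)\leq d(x_n,T_\lambda x_n)=0$ holds directly, so no contractive inequality is needed. Since in Corollary \ref{application} the relation is $\mathfrak{R}=X\times X$, every pair $x_n\,\mathfrak{R}\,p$ is admissible and no order-theoretic bookkeeping is required. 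Thus the whole argument is a two-line triangle-inequality computation once the reduction to $(T_\lambda,S_\lambda)$ is recorded, and I expect no genuine obstacle beyond stating the degenerate-term caveat cleanly.
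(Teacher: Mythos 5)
Your proposal is correct and follows essentially the same route as the paper: reduce to $(T_\lambda,S_\lambda)$, apply the triangle inequality through the averaged operator, and kill the interpolative product because one factor is a distance from $p$ to its own image. The only cosmetic difference is that the paper estimates $d(x_n,p)\leq d(x_n,S_\lambda x_n)+d(T_\lambda p,S_\lambda x_n)$ (i.e.\ takes $x=p$, $y=x_n$ in (\ref{e form})), which is exactly the symmetric variant you mention in your parenthetical.
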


\begin{proof}
It follows from Corollary \ref{application}, that $Fix(T)=Fix(S)=\{p\}.$ Let $%
\{x_{n}\}_{n=0}^{\infty} $ be any sequence in $X$ such that $\lim_{n\rightarrow \infty
}d(W(x_{n},Tx_{n},\lambda ),x_{n})=0$ and $\lim_{n\rightarrow \infty
}d(x_{n},W(x_{n},Sx_{n},\lambda ))=0.$  Then, from triangular inequality and (\ref{e form}), we have
\begin{align*}
d(x_{n},p)& \leq d(x_{n},W(x_{n},Sx_{n},\lambda ))+d(W(x_{n},Sx_{n},\lambda
),p) \\
& =d(x_{n},S_{\lambda }x_{n})+d(T_{\lambda }p,S_{\lambda }x_{n}) \\
& \leq d(x_{n},S_{\lambda }x_{n})+a[d(p,T_{\lambda }p)]^{\alpha
}[d(x_{n},S_{\lambda }x_{n})]^{1-\alpha }.
\end{align*}%
On taking limit as $n\rightarrow \infty $ on both sides of the above
inequality, we obtain that $d(x_{n},p)=0$. Hence the given common fixed
point problem of a pair $(T,S)$ is well-posed
\end{proof}

\subsection{Limit Shadowing Property}

\begin{definition}\rm
Let $(X,d,W)$ be a convex metric space and $%
T,S:X\rightarrow X$. The common fixed point problem of a pair $(T,S)$ is
said to have the limit shadowing property if for any sequence
$\{x_{n}\}_{n=0}^{\infty}$ in $X$ with $\lim_{n\rightarrow \infty }d(W(x_{n},Tx_{n},\lambda
),x_{n})=0$ and $\lim_{n\rightarrow \infty }d(x_{n},W(x_{n},Sx_{n},\lambda
))=0,$ there exists $z\in X$ such that

\begin{enumerate}
\item $\lim_{n\rightarrow \infty }d(W(z,T^{n}z,\lambda),x_{n})=0$

\item $\lim_{n\rightarrow \infty }d(x_{n},W(z,S^{n}z,\lambda))=0$
\end{enumerate}
\end{definition}

\begin{theorem}
\label{Limit sp} Let $(X,d,W)$ be a complete
convex metric space. Suppose that $T,S$ are mappings on $X$ as in the
Corollary \ref{application}. Then, The common fixed point problem of a pair $%
(T,S) $ has limit shadowing property.
\end{theorem}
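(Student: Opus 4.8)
The plan is to deduce this directly from the well-posedness statement just proved. First I would note that, exactly as in the proof of Theorem \ref{Well P}, Corollary \ref{application} gives $Fix(T)=Fix(S)=\{p\}$ for a unique point $p\in X$. Given an arbitrary sequence $\{x_{n}\}_{n=0}^{\infty}$ in $X$ satisfying $\lim_{n\rightarrow\infty}d(W(x_{n},Tx_{n},\lambda),x_{n})=0$ and $\lim_{n\rightarrow\infty}d(x_{n},W(x_{n},Sx_{n},\lambda))=0$, Theorem \ref{Well P} immediately yields $\lim_{n\rightarrow\infty}x_{n}=p$.

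It then remains to produce the shadowing point, and the natural candidate is $z=p$ itself. Since $Tp=p$, induction gives $T^{n}p=p$, and likewise $S^{n}p=p$, for every $n$. Applying the defining inequality of a convex structure (Definition \ref{cpo}) with $x=y=u=p$ shows $d(p,W(p,p;\lambda))\leq\lambda d(p,p)+(1-\lambda)d(p,p)=0$, i.e. $W(p,p;\lambda)=p$; hence $W(z,T^{n}z,\lambda)=W(z,S^{n}z,\lambda)=p$ for all $n$. Therefore
\begin{equation*}
\lim_{n\rightarrow\infty}d(W(z,T^{n}z,\lambda),x_{n})=\lim_{n\rightarrow\infty}d(p,x_{n})=0 \quad\text{and}\quad \lim_{n\rightarrow\infty}d(x_{n},W(z,S^{n}z,\lambda))=\lim_{n\rightarrow\infty}d(x_{n},p)=0,
\end{equation*}
which are precisely conditions (1) and (2) in the definition of the limit shadowing property, so the proof would be complete.

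I do not anticipate a genuine obstacle here: once Theorem \ref{Well P} is available the convergence $x_{n}\rightarrow p$ is free, and the only thing left to verify is the trivial identity $W(p,p;\lambda)=p$, which is forced by the convex structure axiom. The entire argument is thus a short corollary of well-posedness, with the common fixed point serving as the shadowing point.
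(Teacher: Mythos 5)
Your proof is correct, but it takes a genuinely different route from the paper's. The paper fixes an \emph{arbitrary} $z\in X$, identifies $W(z,T^{n}z,\lambda)$ with the iterate $T_{\lambda}^{n}z$, and estimates $d(T_{\lambda}^{n}z,x_{n})\leq d(T_{\lambda}^{n}z,S_{\lambda}x_{n})+d(S_{\lambda}x_{n},x_{n})$ using the interpolative condition (\ref{e form}) on the first term; it never invokes Theorem \ref{Well P}. You instead take the single witness $z=p$, the common fixed point guaranteed by Corollary \ref{application}, observe that $T^{n}p=S^{n}p=p$ and that the convex structure axiom forces $W(p,p;\lambda)=p$, and then read off both limits from the convergence $x_{n}\to p$ supplied by well-posedness. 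Since the definition of limit shadowing only asks for the \emph{existence} of some $z$, your choice is legitimate, and your argument is actually cleaner: it sidesteps the paper's tacit identification of $W(z,T^{n}z,\lambda)$ with $T_{\lambda}^{n}z$ (which are different objects for $n\geq 2$, since the latter is the $n$-fold iterate of $T_{\lambda}$ while the former uses $T^{n}z$), and it avoids having to control the factor $d(T_{\lambda}^{n-1}z,T_{\lambda}^{n}z)$ for arbitrary $z$. What the paper's approach would buy, if carried out carefully, is a shadowing statement tied to the genuine orbit of an arbitrary starting point rather than the constant orbit at $p$; what yours buys is a two-line deduction exhibiting limit shadowing as an immediate corollary of well-posedness together with the existence of a common fixed point.
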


\begin{proof}
For any $z\in X$, consider
\begin{align*}
d(W(z,T^{n}z,\lambda ),x_{n})& =d(T_{\lambda }^{n}z,x_{n}) \\
& \leq d(T_{\lambda }^{n}z,S_{\lambda }x_{n})+d(S_{\lambda }x_{n},x_{n}) \\
&\leq a[d(T_{\lambda }^{n-1}z,T_{\lambda }^{n}z)]^{\alpha }[d(x_{n},S_{\lambda
}x_{n})]^{1-\alpha }.
\end{align*}%
On taking limit as $n\rightarrow \infty $ on both sides of the above
inquality, we get $d(T_{\lambda }^{n}z,x_{n})=0$. Similarly, we have $%
\lim_{n\rightarrow \infty }d(x_{n},W(z,S^{n}z,\lambda ))=0$. Hence the given
common fixed point problem of a pair $(T,S)$ has the limit shadowing
property.
\end{proof}

\subsection{Ulam-Hyers Stability}

Let $(X,d,W)$ be a convex metric space, $%
T,S:X\rightarrow X$ and $\epsilon >0$ . A point $w^{\ast }\in X$ called an $%
\epsilon $-solution of the common fixed point problem for a pair $(T,S)$, if $w^{\ast }$
satisfies the following inequalities%
\begin{equation*}
d(w^{\ast },W(w^{\ast },Tw^{\ast }, \lambda))\leq \epsilon .
\end{equation*}
and,
\begin{equation*}
d(w^{\ast },W(w^{\ast },Sw^{\ast }, \lambda))\leq \epsilon .
\end{equation*}

Now we will give the notion of Ulam-Hyers stability in convex
metric space.

\begin{definition}\rm
Let $(X,d,W)$ be a convex metric space, $%
T,S:X\rightarrow X$ and $\epsilon >0$. The common fixed point problem of a pair $%
(T,S) $
is called Ulam-Hyers stable if and only if for each $\epsilon $-solution $%
w^{\ast }\in X$ of a pair $%
(T,S) $,  there exists a common
solution $x^{\ast }$ of the pair $%
(T,S) $ in $X$ such that
\begin{equation*}
d(x^{\ast },w^{\ast })\leq c \epsilon .
\end{equation*}
\end{definition}

\begin{theorem}
\label{UHS} Let $(X,d,W)$ be a complete convex
metric space. Suppose that $T,S$ be mappings on $X$ as in the Corollary \ref{application}. Then, the common fixed point problem of a pair $%
(T,S) $ is Ulam-Hyers stable.
\end{theorem}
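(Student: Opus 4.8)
The plan is to show that the Ulam--Hyers constant may be taken to be $c=1$, by an argument that parallels the well-posedness proof of Theorem \ref{Well P} with the convergent sequence replaced by a single $\epsilon$-solution. First I would invoke Corollary \ref{application} to obtain the unique common fixed point $p$ of $T$ and $S$, and then use Lemma \ref{fix} to note that $p$ is simultaneously a fixed point of $T_{\lambda}$ and of $S_{\lambda}$, so that $d(p,T_{\lambda}p)=d(p,S_{\lambda}p)=0$. Set $x^{\ast}=p$; this will be the common solution required by the definition.

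Next, let $w^{\ast}\in X$ be an arbitrary $\epsilon$-solution of the common fixed point problem of $(T,S)$. Rewriting the defining inequalities through the averaged maps gives $d(w^{\ast},T_{\lambda}w^{\ast})\leq\epsilon$ and $d(w^{\ast},S_{\lambda}w^{\ast})\leq\epsilon$. Now I would apply the triangle inequality followed by the interpolative contraction condition (\ref{e form}), which holds for all pairs here since Corollary \ref{application} takes $\mathfrak{R}=X\times X$, with $x=x^{\ast}$ and $y=w^{\ast}$:
\begin{align*}
d(x^{\ast},w^{\ast}) &\leq d(x^{\ast},S_{\lambda}w^{\ast})+d(S_{\lambda}w^{\ast},w^{\ast})\\
&= d(T_{\lambda}x^{\ast},S_{\lambda}w^{\ast})+d(S_{\lambda}w^{\ast},w^{\ast})\\
&\leq a[d(x^{\ast},T_{\lambda}x^{\ast})]^{\alpha}[d(w^{\ast},S_{\lambda}w^{\ast})]^{1-\alpha}+d(S_{\lambda}w^{\ast},w^{\ast}).
\end{align*}
Since $d(x^{\ast},T_{\lambda}x^{\ast})=0$ and $\alpha\in(0,1)$, the first summand is $0$, and we are left with $d(x^{\ast},w^{\ast})\leq d(w^{\ast},S_{\lambda}w^{\ast})\leq\epsilon$. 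Hence the common fixed point problem of $(T,S)$ is Ulam--Hyers stable with $c=1$.

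There is essentially no deep obstacle in this argument; the only points deserving a word of care are that $[d(x^{\ast},T_{\lambda}x^{\ast})]^{\alpha}=0^{\alpha}=0$ uses $\alpha>0$ (guaranteed by $\alpha\in(0,1)$), and the degenerate case $x^{\ast}=w^{\ast}$, where the conclusion $d(x^{\ast},w^{\ast})=0\leq c\epsilon$ is immediate. One could symmetrize by instead using the $T_{\lambda}$-side of (\ref{e form}) with the roles of the two points interchanged, but this is not needed for the estimate above.
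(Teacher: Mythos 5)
Your argument is correct and is essentially the same as the paper's own proof: both take $x^{\ast}=p$ the unique common fixed point from Corollary \ref{application}, translate the $\epsilon$-solution conditions into $d(w^{\ast},T_{\lambda}w^{\ast})\leq\epsilon$ and $d(w^{\ast},S_{\lambda}w^{\ast})\leq\epsilon$, and then apply the triangle inequality through $S_{\lambda}w^{\ast}$ together with (\ref{e form}). Your version is in fact slightly more careful, since it spells out why the interpolative term vanishes (namely $d(x^{\ast},T_{\lambda}x^{\ast})=0$ and $\alpha>0$), a point the paper leaves implicit.
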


\begin{proof}
Since $Fix(T)=Fix(T_{\lambda })$  and $Fix(S)=Fix(S_{\lambda }),$ it follows that the common fixed point problem for the pair $%
(T,S) $
 is equivalent to the common fixed point problem for pair $%
(T_{\lambda},S_{\lambda}). $
Let $w^{\ast }$ be $\epsilon $-solution of the common fixed point problem of a pair $%
(T,S) $ , that is,
\begin{equation*}
d(w^{\ast },W(w^{\ast },Tw^{\ast },\lambda))\leq \epsilon ,
\end{equation*}
and,
\begin{equation*}
d(w^{\ast },W(w^{\ast },Sw^{\ast },\lambda))\leq \epsilon ,
\end{equation*}
which is equivalent to,
\begin{equation*}
d(w^{\ast },T_{\lambda }w^{\ast })\leq \epsilon .
\end{equation*}%
and,
\begin{equation}  \label{newwwww1}
d(w^{\ast },S_{\lambda }w^{\ast })\leq \epsilon .
\end{equation}%
respectively.
Using (\ref{e form}) and  (\ref{newwwww1}), we get
\begin{align*}
d(x^{\ast },w^{\ast }) &= d(T_{\lambda}x^{\ast },w^{\ast }) \leq
d(T_{\lambda}x^{\ast },S_{\lambda}w^{\ast })+ d(S_{\lambda}w^{\ast },w^{\ast
} ) \\
& \leq a[d(x^{\ast },T_{\lambda}x^{\ast })]^{\alpha}[d(w^{\ast
},S_{\lambda}w^{\ast })]^{1-\alpha}+ d(S_{\lambda}w^{\ast },w^{\ast } ) \\
&\leq \epsilon
\end{align*}
Hence the common fixed point problem of a pair $%
(T,S) $ is Ulam-Hyers stable.
\end{proof}

\section{Conclusions}

\begin{enumerate}
\item We introduced a more generalized class of contractive mappings,
called $\mathfrak{R}$-enriched interpolative Kannan pair, and proved a
common fixed point result in the setup of $\mathfrak{R}$-complete convex
metric space.

\item We presented examples to support the concepts introduced and results
proved herein.

\item Moreover, we studied well-posedness, limit shadowing property and Ulam-Hyers stability for the results (Corollary \ref{application}) introduced herein.
\end{enumerate}


\begin{thebibliography}{99}
\bibitem{exp}  Abbas, M., \emph{Common fixed point results with applications in convex metric space}, Fasc. Math., \textbf{39} (2008), 5--15

\bibitem{Abbas2}  Abbas, M.  Anjum, R. and  Berinde, V., \emph{Enriched multivalued
contractions with applications to differential inclusions and dynamic
programming}, Symmetry., \textbf{13}(8) (2021), 1350
https://doi.org/10.3390/sym13081350

\bibitem{Abbas11} Abbas, M.  Anjum, R. and  Berinde, V., \emph{Equivalence of Certain
Iteration Processes Obtained by Two New Classes of Operators}, Mathematics.,%
\textbf{9}(18) (2021) 2292. https://doi.org/10.3390/math9182292.

\bibitem{Abbas111} Abbas, M.  Anjum, R. and  Iqbal, H.,  \emph{Generalized enriched cyclic contractions with application to generalized iterated function system}, Chaos, Solitons and Fractals., \textbf{154} (2022), https://doi.org/10.1016/j.chaos.2021.111591.


\bibitem{Agarwal}  Agarwal, R.P. and  Karapinar, E., \emph{Interpolative Rus-Reich-%
\'{C}iri\'{c} Type Contractions Via Simulation Functions}, An. St. Univ.
Ovidius Constanta, Ser. Mat., \textbf{27}(3) (2019)

\bibitem{AnjumNew}  Anjum, R. and  Abbas, M., \emph{Common Fixed point theorem for
modified Kannan enriched contraction pair in Banach spaces and its
Applications}, J. Filomat., \textbf{35}(8) (2021), 2485--2495.
https://doi.org/10.2298/FIL2108485A.

\bibitem{Anjum1}  Anjum, R. and  Abbas, M., \emph{Fixed point property of a nonempty
set relative to the class of friendly mappings}, RACSAM., \textbf{116}(1)
(2022) 1-10. https://doi.org/10.1007/s13398-021-01158-5.



\bibitem{Aydi}  Aydi, H.  Karapinar, E. and  Rold\'{a}n L\'{o}pez de
Hierro, A.F., \emph{w-Interpolative \'{C}iri\'{c}-Reich-Rus-Type Contractions},
Mathematics., \textbf{7}(57) (2019). https://doi.org/10.3390/math7010057.

\bibitem{Aydi2}  Aydi, H.  Chen, C.M. and  Karapinar, E., \emph{Interpolative \'{C}iri%
\'{c}-Reich-Rus types via the Branciari distance}, Mathematics., \textbf{7}(1)
 (2019), 84. https://doi.org/10.3390/math7010084

\bibitem{Banach}  Banach, S., \emph{Sur les op\'{e}rations dans les ensembles
abstraits et leurs applications aux \'{e}quations int\'{e}grales}, Fund.
Math., \textbf{3} (1922), 133--181

\bibitem{b}  Berinde, V., \emph{General constructive fixed point theorems for \'{C}%
iri\'{c} type almost contractions in metric spaces}, Carpathian Journal of
Mathematics., (2008), 10--19

\bibitem{B6}  Berinde, V. and  P\u{a}curar, M., \emph{Kannan's fixed point
approximation for solving split feasibility and variational inequality
problems}, Journal of Computational and Applied Mathematics., (2020), 377--427

\bibitem{B4} Berinde, V. and  P\u{a}curar, M., \emph{Approximating fixed points of
enriched contractions in Banach spaces}, Journal of Fixed Point Theory and
Applications., \textbf{22}(2) (2020), 1--10

\bibitem{B2} Berinde, V., \emph{Approximating fixed points of enriched nonexpansive
mappings by Krasnoselskii iteration in Hilbert spaces}, Carpathian J. Math.,
\textbf{35}(3) (2019), 293-304. http://dx.doi.org/10.37193/CJM.2019.03.04

\bibitem{main cite} Berinde, V. and  P\u{a}curar, M., \emph{Existence and
Approximation of Fixed Points of Enriched Contractions and Enriched $\varphi$%
-Contractions}, Symmetry., \textbf{13} (2021), 498.
https://doi.org/10.3390/sym13030498

\bibitem{B7} Berinde, V. and  P\u{a}curar, M., \emph{Approximating fixed points of
enriched Chatterjea contractions by Krasnoselskii iterative algorithm in
Banach spaces}, Journal of Fixed Point Theory and Applications., \textbf{23}%
(4) (2021), 1--16

\bibitem{B9} Berinde, V. and  P\u{a}curar, M., \emph{Fixed point theorems for
enriched \'{C}iri\'{c}-Reich-Rus contractions in Banach spaces and convex
metric spaces}, Carpathian J. Math., \textbf{37} (2021), 173--184

\bibitem{article.5}  D Blasi, F.S. and  Myjak, J., \emph{Sur la porosit\'{e} de
l'ensemble des contractions sans point fixe}, C.R. Acad. Sci. Paris., \textbf{%
308} (1989), 51--54.

\bibitem{Debnath}  Debnath, P. and  de La Sen, M., \emph{Set-valued interpolative
Hardy-Rogers and set-valued Reich-Rus-\'{C}iri\'{c}-type contractions in
b-metric spaces}, Mathematics., \textbf{7}(9) (2019), 849.
https://doi.org/10.3390/math7090849

\bibitem{Gaba}  Gaba, Y.U. and  Karapinar, E., \emph{A New Approach to the
Interpolative Contractions}, Axioms., \textbf{8}(4) (2019), 110.
https://doi.org/10.3390/axioms8040110

\bibitem{Gaba2}  Gaba, Y.U.  Aydi, H. and  Mlaik, N., \emph{$(\rho, \eta, \mu)$%
-Interpolative Kannan Contractions I}, Axioms., \textbf{10}(3) (2021), 212.
https://doi.org/10.3390/axioms10030212

\bibitem{ortho} Gordji, M.E.  Eshaghi, M.  Ramezani, M. M.  De La Sen and  Cho, Y. J., \emph{On orthogonal sets and Banach fixed point theorem}, Fixed Point
Theory., \textbf{18}(2) (2017), 569--578


\bibitem{Kannan1}  Kannan, R., \emph{Some results on fixed points}, Bull. Calcutta
Math. Soc., \textbf{60} (1968), 71--76

\bibitem{Kannan}  Kannan, R., \emph{Some results on fixed points}, II, Amer. Math.
Monthky., \textbf{76} (1969), 405--408

\bibitem{Karapinar}  Karapinar, E., \emph{Revisiting the Kannan Type Contractions
via Interpolation}, Adv. Theory Nonlinear Anal. Appl., \textbf{2}(2) (2018),
85--87. http://dx.doi.org/10.31197/atnaa.431135

\bibitem{Karapinar2}  Karapinar, E., O. Alqahtani and H. Aydi, \emph{On
Interpolative Hardy-Rogers Type Contractions}, Symmetry., \textbf{11}(1)
(2019), 8. https://doi.org/10.3390/sym11010008

\bibitem{Karapinar3}  Karapinar, E.  Agarwal R. and  Aydi, H., \emph{Interpolative
Reich-Rus-\'Ciri\'c Type Contractions on Partial Metric Spaces}, Mathematics.,
\textbf{6}(11) (2018), 256. https://doi.org/10.3390/math6110256

\bibitem{Karapinar4}  Karapinar E. and  Fulga, A., \emph{New Hybrid Contractions on
b-Metric Spaces}, Mathematics., \textbf{7}(7) (2019), 578.
https://doi.org/10.3390/math7070578

\bibitem{Kikkawa}  Kikkawa M. and  Suzuki, T., \emph{Some similarity between
contractions and Kannan mappings II}, Bull. Kyushu Inst. Technol. Pure Appl.
Math., \textbf{55} (2008), 1--13

\bibitem{R metric} Khalehoghli, S.  Rahimi, H. and
Gordji, M. E., \emph{Fixed point theorems in R-metric spaces with applications}, AIMS
Mathematics., \textbf{5}(4) (2020), 3125--3137



\bibitem{nieto} ]  Nieto, J.J. and    L\'{o}pez, R.R., \emph{Contractive mapping theorems in
partially ordered sets and applications to ordinary differential equations},
Order., \textbf{22} (2005), 223--239


\bibitem{N}  Noorwali, M., \emph{Common Fixed Point for Kannan type via
Interpolation}, J. Math. Anal., \textbf{9} (2018), 92--94

\bibitem{ran}  Ran, A.C.M. and  Reurings, M.C.B., \emph{A fixed point theorem in
partially ordered sets and some application to matrix equations}, Proc. Amer.
Math.Soc., \textbf{132} (2004), 1435--1443


\bibitem{article.11}  Reich, S. and  Zaslavski, A.J., \emph{Well-posedness of fixed
point problems}, Far East Journal of Mathematical Sciences (FJMS). Special
volume, part III, (2001), 393--401

\bibitem{Takahashi}  Takahashi, W.A., \emph{Convexity in metric space and
nonexpansive mappings}, I.Kodai Math. Sem. Rep.,\textbf{22} (1970),
142--149





\end{thebibliography}
\end{document}